\theoremstyle{plain}
\newtheorem{theorem}{Theorem}%[chapter]
\newtheorem{proposition}[theorem]{Proposition}
\newtheorem{corollary}[theorem]{Corollary}
\newtheorem{lemma}[theorem]{Lemma}
\theoremstyle{definition}
\theoremstyle{remark}
\newtheorem{remark}[theorem]{Remark}
\def\ov{\overline}
\def\R{\mathbb R}
\def\P{\mathbb P}
\def\E{\mathbb E}
\def\eps{\varepsilon}
\def\00{\mathbf 0}
\def\bet{\begin{theorem}}
\def\ent{\end{theorem}}
\def\bec{\begin{corollary}}
\def\enc{\end{corollary}}
\def\bep{\begin{proof}}
\def\enp{\end{proof}}
\def\f{\frac}
\def\om{\omega}
\def\g{\gamma}
\def\la{\lambda}
\def\es{\emptyset}
\def\ms{\mathsf}
\def\N{\mathbb N}
\def\one{\mathds1}
\def\d{\mathrm d}
\renewcommand\leq{\leqslant}
\def\r{\rho}
\renewcommand\le{\leqslant}
\renewcommand\ge{\geqslant}
\def\bel{\begin{lemma}}
\def\enl{\end{lemma}}
\def\been{\begin{enumerate}}
\def\enen{\end{enumerate}}
\def\sm{\setminus}
\def\k_d{\kappa_d}
\def\s{\sigma}
\def\k{\kappa}
\def\bepr{\begin{proposition}}
\def\enpr{\end{proposition}}
\def\vp{\varphi}
\def\Cov{\ms{Cov}}
\def\De{\Delta}
\begin{document} 

\author{Moritz Otto}
\address[Moritz Otto]{Department of Mathematics, Aarhus University, Denmark}
\email{otto@math.au.dk}

\subjclass[2010]{Primary 60K35. Secondary 60G55, 60D05.}
\keywords{coupling method, determinantal process, Ginibre process, Kantorovich-Rubinstein distance, Palm calculus, Poisson approximation, scaling limit}
\renewcommand{\thefootnote}{\fnsymbol{footnote}}

\title{Couplings and Poisson approximation for stabilizing functionals of determinantal point processes%with fast decay of correlations
} 
%\date{\today}
\maketitle

\begin{abstract} 
We prove a Poisson process approximation result for stabilizing functionals of a determinantal point process. Our results use concrete couplings of determinantal processes with different Palm measures and exploit their association properties. Second, we focus on the Ginibre process and show in the asymptotic scenario of an increasing window size that the process of points with a large nearest neighbor distance converges after a suitable scaling to a Poisson point process. As a corollary, we obtain the scaling of the maximum nearest neighbor distance in the Ginibre process, which turns out to be different from its analogue for independent points.
\end{abstract}

\noindent

\vspace{0.1cm}
\noindent

\section{Introduction}\label{sintro}

Determinantal point processes (DPPs) were introduced in quantum mechanics to study configurations of fermions \cite{macchi}. Due to their repulsive nature, the play a fundamental role in applied sciences, e.g.~as a model for base stations in a wireless network \cite{miyoshi}. In mathematics, DPPs arise naturally in different fields, such as eigenvalues of random matrices \cite{diaconis} and random spanning trees \cite{burton1993}. DPPs have notable probabilistic properties. Amongst others, the (reduced) Palm process of a DPP is again a determinantal process and important quantities such as the Laplace transform and Janossy densities admit closed-form expressions (see e.g. \cite{torrisi}). A DPP on $\R^d$ is determined by its correlation kernel $K$, which is a Hermitian function from $\R^d \times \R^d$ to $\mathbb C$. An important DPP is the Ginibre process on $\R^2$ with Gaussian kernel given in Section 2. 

Let $\xi$ be a stationary DPP on $\R^d$ and let $g$ be a measurable function from $\R^d \times \mathbf N$ to $\{0,1\}$, where we write $\mathbf N$ for the set of $\sigma$
-finite point configurations on $\R^d$. For $x\in \R^d$, let $\delta_x$ denote the Dirac measure in $x$. For some measurable $W \subset \R^d$, let
$$
\Xi[\xi]:=\sum_{x \in \xi \cap W} g(x,\xi) \delta_x.
$$
Here, the function $g$ has the effect of a thinning of $\xi$, in the sense that $\Xi$ is the point process of all points $x \in \xi$ in the set $W$ which satisfy $g(x,\xi)=1$. Random measures of this type are flexible models that appear in the study of random spatial graphs, stochastic topology and geometric extreme value theory.

In this article, we study the distance (in an appropriate distance of point processes) of $\Xi$ and a Poisson point process. To the best of our knowledge, this is the first paper which systematically studies Poisson approximation for determinantal processes. This continues the studies for stabilizing functionals of Poisson point processes \cite{DST16,BSY21,O22}, Poisson hyperplanes processes \cite{O21} and Gibbs point processes \cite{LO}. However, the tools used in our paper are different then in above mentioned. As main contributions, this article shows:

(i) If the correlation kernel $K$ is fast decaying and if the thinning function $g$ is stabilizing and satisfies some natural assumptions, the bound of the distance of $\Xi$ and a Poisson process is analogous to the bounds obtained for thinned Poisson point processes (see \cite{BSY21} and \cite{O21}).

(ii) If $\xi$ is the Ginibre process and $g(x,\xi)$ is indicator which is one if $\xi\setminus \{x\}$ does not have points in a ball with a given radius $v$ around $x$, we prove in as asymptotic scenario where the size of $W$ and $v$ tend to infinity, that an appropriate scaling of $\Xi$ converges to a Poisson process.

Our paper is organized as follows. In Section \ref{sec:model} we introduce determinantal point processes state our two main results. In Section \ref{sec:prelim}, we provide important notions such as Palm theory, negative association and correlation decay. The proof of Theorem \ref{th:decr} is given in Section \ref{sec:pr1}. In Section \ref{sec:pr2} we provide the proof of Theorem \ref{th:balls}.

\section{Model and main results} \label{sec:model}

We  work on the Euclidean space $\R^d$ ($d \ge 1$) equipped with its Borel $\s$-field $\mathcal B^d$ and Euclidean norm $\|\cdot\|$. We denote by $\mathbf N$  the space of all $\sigma$-finite counting measures on $\R^d$, and by $\widehat{\mathbf{N}}$ the space of all finite counting measures on $\R^d$ and equip $\mathbf N$ and $\widehat{\mathbf{N}}$ with their corresponding $\sigma$-fields $\mathcal N$ and $\widehat{\mathcal N}$, which are induced by the maps $\omega \mapsto \omega(B)$ for all $B \in \mathcal B^d$. A {\em point process} is a random
element $\xi$ of $\mathbf N$, defined over some fixed
probability space $(\Omega,\mathcal F,\P)$.
The {\em intensity measure} of $\xi$ is the
measure $\E[\xi]$ defined by $\E[\xi](B):=\E[\xi(B)]$, $B\in\mathcal B^d$. For $z \in \R^d$ and $r>0$ let $B_r(z)$ be the closed Euclidean ball with radius $r$ around $z$. For $B \in \mathcal B^d$ we write $|B|$ for the Lebesgue measure of $B$.

Let $K:(\R^d)^2 \to \mathbb{C}$ be a complex function. We say that $\xi$ is a {\em determinantal point process with correlation kernel $K$}, if for every $n \in \N$ and pairwise disjoint $A_1,\dots,A_n \in \mathcal B^d$ we have that
\begin{align*}
	\E [\xi(A_1)\cdots \xi(A_n)]=\int_{A_1 \times \cdots \times A_n} \det (K(x_i,x_j))_{i,j=1}^n \d(x_1,\dots x_n), 
\end{align*} 
where $\d\dots$ denotes integration with respect to the Lebesgue measure on $\R^d$, $(K(x_i,x_j))_{i,j=1}^m$ is the $m\times m$-matrix with entry $K(x_i,x_j)$ at position $(i,j)$, and $\det M$ is the determinant of a complex-valued $m\times m$-matrix $M$. This says that $\xi$ has correlation functions of all orders and the $m$th order correlation function $\rho^{(m)}$ is given by
\begin{align*}
\rho^{(m)}(x_1,\dots,x_m)=	\det (K(x_i,x_j))_{i,j=1}^m,\quad x_1,\dots,x_m \in \R^d,\,\quad m \in \mathbb{N},
\end{align*}
and that it is locally integrable. In this article we assume that $K$ satisfies the following assumptions (i)-(iv).
\begin{enumerate}
	\item[(i)] $K$ is Hermitian, i.e.~$K(x,y)=\overline{K(y,x)}$, $x,y \in \R^d$,
	\item[(ii)] $K$ is locally square integrable, i.e.~for every compact $B \in \mathcal B^d$ the integral $$\int_B \int_B |K(x,y)|^2 \d y \d x$$ is finite,
	\item[(iii)] $K$ is locally of trace class, i.e.~for every compact $B \in \mathcal B^d$ the integral $\int_B K(x,x) \d x$ is finite,
\end{enumerate}
 Under the assumptions (i)--(iii), it follows from Mercer's theorem that for every compact $B \subset \R^d$, the restriction $\xi_B$ of $\xi$ to $B$ is a determinantal point process whose kernel $K_B$ is for almost all $(x,y)\in B \times B$ given by
\begin{align*}
	K_B(x,y)=\sum_{k=1}^\infty \lambda_k^B \phi_k^B(x) \overline{\phi_k^B(y)},
\end{align*}
where $\lambda_k^B \in \mathbb{R}$, $k \in \mathbb{N}$, and the functions $\phi_k^B$, $k \in \mathbb{N}$, form on orthonormal base of $L^2(B)$. Finally, we assume that
\begin{enumerate}
	\item[(iv)] $0 \le \lambda_k^B \le 1$ for all $k \in \mathbb{N}$ and all compact $B \in \mathcal B^d$.
\end{enumerate} 
Under the assumptions (i)--(iv) there is a unique (in distribution) determinantal point process with correlation kernel $K$ (see \cite[Theorem 3]{S00}).

For $x \in \R^d$ we call $\xi^x$ a Palm version of the point process $\xi$ at $x$, if for all measurable $f:\R^d \times \mathbf N \to \R_+$,
\begin{align}
\E\Big[\int f(x,\xi) \xi(\d x)\Big]=\int f(x,\xi^x) \E[\xi](\d x).\label{Palmsp}
\end{align}
Later, we will generalize this definition and define a Palm process with respect to another point process. 

Let $\xi$ be a stationary determinantal process satisfying (i)--(iv) with intensity $\r >0$. Let $g:\R^d \times \mathbf N \to \{0,1\}$ be a measurable function (called {\em score function}) and let $W \in \mathcal B^d$. We define
\begin{align}
\Xi[\om]:=\sum_{x \in \om \cap W} g(x,\om) \delta_{x}, \label{def:Xigen}
\end{align}
and set $\Xi:=\Xi[\xi]$. Note that by \eqref{Palmsp}, the intensity measure $\mathbf L$ of $\Xi$ is given by
$$
\mathbf L(A)=\r \int_{W \cap A} \E[g(x,\xi^x)]\, \d x,\quad A \in \mathcal B^d.
$$
In this article we study the Kantorovich-Rubinstein (KR) distance of $\Xi$ and a finite Poisson process. We recall the definition of the KR distance from \cite{DST16}. For finite point processes $\zeta$ and $\xi$ on $\R^d$ the KR distance is given by
\begin{align*}
	\mathbf{d_{KR}}(\zeta, \xi) := \sup_{h \in \text{Lip}}|\mathbb{E} h(\zeta)-\mathbb{E} h(\xi)|,
\end{align*}
where $\text{Lip}$ is the class of all measurable 1-Lipschitz functions $h:\widehat{\mathbf{N}} \to \mathbb{R}$ with respect to the total variation between measures $\omega_1, \omega_2$ on $\R^d$ given by
\begin{align*}
	d_{\text{TV}}(\omega_1,\omega_2):=\sup |\omega_1(A)-\omega_2(A)|,
\end{align*}
where the supremum is taken over all $A \in \mathcal B^d$ with $\omega_1(A), \omega_2(A)<\infty$. Under appropriate conditions on $\xi$ and $g$, we prove that $\Xi$ can be approximated by a Poisson process. 

We suppose that there exists $\alpha\in (0,\infty)$ such that for all $A \in \mathcal B^d$ and all $\om \in \mathbf N$, 
 \begin{align}
 \sum_{x \in \om \cap A} g(x,\om) <\alpha |A|,\label{kiss}
 \end{align}
and assume that $g$ is monotonic in the sense that for all $x \in W$, it holds that 
\begin{align}
g(x,\om_1) \le g(x,\om_2) \quad \text{or}\quad g(x,\om_1) \ge g(x,\om_2),\qquad \om_1 \subset \om_2.\label{mono}
\end{align}
We further assume that $g$ is stabilizing, by which we mean that there es a measurable function $\mathcal S:\R^d \times \mathbf N\to \mathcal F$ such that 
$$ g(x, \om) = g(x, \om \cap \mathcal S(x,\om)) 
$$  holds for any $\om \in \mathbf{N}$ and any $x \in \R^d$, where $ \mathcal{F}$ is the set of closed subsets in $\R^d$. Further suppose that $\mathcal S$ is a {\em stopping set}, which says that
$$
\{\om \in \mathbf N:\, \mathcal S(x,\om) \subset S\}=\{\om \in \mathbf N:\,\mathcal S(x,\om \cap S) \subset S\}.
$$
%The event $\{R(o,\om)\le r\}$ is supposed to be measurable with respect to the $\sigma$-field $\mathcal{N}_{B_r(o)}$. %We assume that for all $\ell \in \N$ there are constants $a\ell$, such that for any sequence $(r)_{n \in \N}$ such that
%\begin{align}
% \sup_{x_1,\dots,x_\ell \in \R^d}\P(R(o,\xi^{x_1,\dots,x_\ell})>t)\le \varphi_n(a_{\ell}t),\label{estab}
%\end{align}
%for some $\varphi:\mathbb{R}_+ \to \mathbb{R}_+$ with $\liminf_{r \to \infty} r^{-c} \log \phi(r) \in (-\infty,0)$ for some $c\in (0,\infty)$.
%We also assume for any $x\in \R^d$ that $\mathcal{S}_n(x,\cdot):\mathbf{N}\to \mathcal{F},\, \om \mapsto B_{R_n(x,\om)}(x)$ is a stopping set, i.e.\ for any compact sets $S \subseteq \R^d$, 
%\begin{align*}
%	\{\omega \in \mathbf{N} :\,\mathcal{S}_n(x,\omega)\subseteq S \}= 	\{\omega \in \mathbf{N} :\,\mathcal{S}_n(x,\omega \cap S)\subseteq S \},
%\end{align*}
%where $ \mathcal{F}$ denotes the set of closed subsets in $\R^d$.

Moreover, we assume that the kernel $K:(\R^d)^2 \to \mathbb{C}$ satisfies
\begin{align}
	|K(x,y)|\le \phi(\|x-y\|),\quad x,y \in \R^d,\label{fastdecay}
\end{align}
for some decreasing function $\phi:\mathbb{R}_+ \to \mathbb{R}_+$ with $\lim_{r \to \infty}  \phi(r) =0$.

\begin{theorem} \label{th:decr}
	Let $\xi$ be a stationary determinantal process with kernel $K$ satisfying \eqref{fastdecay} and intensity $\rho \in (0,\infty)$. Let $\Xi$ be the score sum defined in \eqref{def:Xigen} with intensity measure $\mathbf L$ and suppose that $g$ satisfies \eqref{kiss} and \eqref{mono} and is stabilizing with respect to the stopping set $\mathcal S$. For Borel sets $S, T \subset \R^d$ with $o\in S \subset T$ let $S_x:=x+S$ and $T_x:=x+T,\,x \in W,$  and define
	$$
	\tilde g(x,\om):=g(x,\om) \one\{\mathcal S(x,\om)\subseteq S_x\},\quad x \in \R^d,\,\om \in \mathbf N.
	$$
	Let $\zeta$ be a finite Poisson process on $\R^d$ with intensity measure $\mathbf M$. Then,
	\begin{align*}
		&\mathbf{d_{KR}}(\Xi,\zeta) \le d_{TV}(\mathbf L, \mathbf M) +  2(E_1+E_2+E_3) +F,
	\end{align*}
where
\begin{align*}
	E_1&:=\r \int_W \P(\mathcal S(x,\eta^x) \not \subset S) \, \d x,\\
	E_2&:=\r^2 \int_W \int_{W \cap T_x} \E[\tilde g(x,\xi^x)] \E[\tilde g(y,\xi^y)]\,\d y \d x,\\
	E_3&:=\r^2 \int_W \int_{W \cap T_x} \E[\tilde g(x,\xi^{x,y}) \tilde g(y,\xi^{x,y})]\,\d y \d x,\\
	F&:=c\|K\| \max(|S|,1) |W\oplus S|^2 \phi(d(T,S)),
\end{align*}
where $\|K\|=\sup_{x,y \in \R^d} |K(x,y)|$, $W \oplus S$ is the Minkowski sum of $W$ and $S$, $d(T,S)$ is the Hausdorff distance of $T$ and $S$ and the constant $c>0$ does not depend on $K$, $g$, $W$, $S$ and $T$.
\end{theorem}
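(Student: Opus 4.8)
The plan is to combine a Stein-type master bound for Poisson process approximation in the Kantorovich--Rubinstein distance (as developed in \cite{DST16,BSY21}) with the determinantal structure of $\xi$, exploiting negative association and the explicit couplings of $\xi$ with its Palm processes $\xi^x$ and $\xi^{x,y}$. Solving the Stein equation for a $1$-Lipschitz test function $h$ and applying the Mecke--Palm formula \eqref{Palmsp} to $\Xi$, one rewrites $\E h(\Xi)-\E h(\zeta)$ as a term comparing intensity measures, bounded by $d_{TV}(\mathbf L,\mathbf M)$, plus an integral over $x\in W$ of a second-order difference of the Stein solution evaluated at $\Xi$ and at the reduced Palm process $\Xi^{!x}$ (induced by $\xi^x$). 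This integral splits into a mean-field part integrated against the Poisson intensity, which will produce the product term $E_2$, and a Palm part integrated against $\mathbf L$, which will produce the joint term $E_3$; the differences of the Stein solution are bounded by constants that, with the triangle inequality of the next step, give the overall prefactor $2$.

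The second step localizes the score. I replace $g$ by $\tilde g(x,\om)=g(x,\om)\one\{\mathcal S(x,\om)\su S_x\}$ and write $\tilde\Xi$ for the associated score sum. As $g$ and $\tilde g$ can differ at a point $x$ only when the stopping set $\mathcal S(x,\cdot)$ fails to lie in $S_x$, the Mecke--Palm formula bounds both $\mathbf{d_{KR}}(\Xi,\tilde\Xi)$ and the total variation between the intensity measures of $\Xi$ and $\tilde\Xi$ by $E_1=\r\int_W\P(\mathcal S(x,\xi^x)\not\su S)\,\d x$; combined with the triangle inequality over the chain $\Xi\to\tilde\Xi\to\zeta$ this yields the term $2E_1$ and reduces matters to approximating $\tilde\Xi$. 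The payoff is that $\tilde g(x,\cdot)$ is now measurable with respect to the restriction of the configuration to $S_x$, so that for $y\notin T_x$ the scores $\tilde g(x,\cdot)$ and $\tilde g(y,\cdot)$ are determined by the configurations in the two windows $S_x$ and $S_y$, which are well separated once $y\notin T_x$, the relevant separation being encoded in $d(T,S)$ through the correlation-decay estimate below.

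For the approximation of $\tilde\Xi$, I evaluate the two pieces of the master bound. The Palm conditioning at $x$ (passing from $\xi$ to $\xi^x$, and, for the joint piece, to $\xi^{x,y}$) changes the score $\tilde g(y,\cdot)$ only for points $y$ in the dependence range, and by stabilization together with the localization to $S_x$ these are confined to $y\in W\cap T_x$. Here the concrete couplings of $\xi$ with $\xi^x$ and $\xi^{x,y}$, together with the negative association of the determinantal process and the monotonicity \eqref{mono}, let me bound the number of affected scores and integrate. Applying the first- and second-order Mecke--Palm formulas and bounding the second factorial moment density $\rho^{(2)}(x,y)=\r^2-|K(x,y)|^2\le\r^2$, the mean-field piece contributes $E_2=\r^2\int_W\int_{W\cap T_x}\E[\tilde g(x,\xi^x)]\E[\tilde g(y,\xi^y)]\,\d y\,\d x$ and the Palm piece contributes $E_3=\r^2\int_W\int_{W\cap T_x}\E[\tilde g(x,\xi^{x,y})\tilde g(y,\xi^{x,y})]\,\d y\,\d x$. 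Monotonicity and association are what keep these bounds one-sided, so that no compensating negative term is lost.

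It remains to estimate the distant pairs $y\notin T_x$, which must be absorbed into $F$. For such pairs the scores $\tilde g(x,\cdot)$ and $\tilde g(y,\cdot)$ are measurable with respect to the separated windows $S_x$ and $S_y$, and the determinantal correlation-decay estimate---driven by \eqref{fastdecay} and the square-integrability and trace-class assumptions (i)--(iv), to be recorded in Section \ref{sec:prelim}---shows that, per pair, the disagreement of the coupling of $\xi$ with $\xi^x$ (respectively $\xi^{x,y}$) on $S_y$ contributes at most of order $\|K\|\max(|S|,1)\,\phi(d(T,S))$ to the Stein bound. Summing this over the at most $|W\oplus S|^2$ pairs $(x,y)$ with $x,y$ ranging over the enlargement $W\oplus S$ produces $F=c\,\|K\|\max(|S|,1)\,|W\oplus S|^2\,\phi(d(T,S))$. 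I expect this decoupling to be the main obstacle: one must construct the couplings of $\xi$ with its one- and two-point Palm processes so that their disagreement far from the conditioning points is quantitatively governed by the kernel bound $\phi$, and then carry out the summation over pairs while keeping the constant $c$ independent of $K$, $g$, $W$, $S$ and $T$ and retaining $\|K\|$ and $|S|$ explicitly---delicate bookkeeping in which the determinantal structure, negative association together with fast kernel decay, does the essential work.
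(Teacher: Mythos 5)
Your architecture matches the paper's: the same master inequality from \cite{BSY21} (used in the coupling form \eqref{eq:KRbou} rather than by re-solving a Stein equation, but that is cosmetic), the same truncation $g\to\tilde g$ with a triangle inequality yielding $2E_1$, and the same split into near pairs (giving $E_2$, $E_3$) and far pairs (to be absorbed into $F$). However, you explicitly leave the far-field estimate --- the one genuinely new term $F$ --- as an expectation rather than an argument, and this is where the real content of the proof lies. The missing bridge is that a ``correlation-decay per pair'' bound cannot be applied directly to quantities involving the score $g$: the decay estimate \eqref{eq:rhodec} controls only correlation functions, i.e.\ covariances of \emph{point counts}, not covariances involving $\tilde g$. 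The paper closes this gap in three steps you do not supply. First, it upgrades the stochastic dominations $\xi^{x!}\le\xi$ (from \cite{G10,MR21}) and $\xi^{x!,\Xi}\le\xi^{x!}$ or $\ge\xi^{x!}$ (from negative association via \eqref{xiPalmdomincr}--\eqref{xiPalmdomdecr}, with the direction dictated by \eqref{mono}) into monotone couplings by Strassen's theorem; under a monotone coupling the expected symmetric difference collapses to a \emph{difference of first moments}, one piece of which is computed exactly from the Palm kernel \eqref{KPalm} as $\rho^{-1}\int_{W\setminus T_x}|K(x,y)|^2\,\d y$. Second, the remaining piece is the covariance $\Cov(\xi^{x}(W\setminus T_x),\tilde g(x,\xi^{x}))$, and the key trick is to apply negative association of the determinantal process $\xi^{x!}$ to the auxiliary increasing function $\om\mapsto\om(S_x)\mp\tilde g(x,\om)$, which bounds this score-dependent covariance by $-\Cov(\xi^{x!}(W\setminus T_x),\xi^{x!}(S_x))$, a pure count covariance to which \eqref{eq:rhodec} finally applies and produces $\|K\|\,|S|\,|W|^2\phi(d(T,S))$.

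Third, the far-field contribution of \emph{changed scores} (the term $T_4$ in Proposition \ref{prop:Poi}) needs its own argument: a single point of the coupling's symmetric difference at $z$ can flip $\tilde g(y,\cdot)$ only for $y\in S_z$, and condition \eqref{kiss} caps the number of such flips by $\alpha|S|$, so $T_4$ reduces to the symmetric-difference bound on the enlarged window $W\oplus S$ --- which is where the factor $|W\oplus S|^2$ in $F$ comes from. Your proposal mentions \eqref{kiss} nowhere and treats the far pairs as if only the disagreement of configurations mattered, not the propagation of that disagreement through the score. Without the monotone-coupling reduction to first moments, the negative-association trick converting score covariances into count covariances, and the $\alpha|S|$ bookkeeping for flipped scores, the term $F$ cannot be obtained from the ingredients you list; so the plan as written identifies the right target but does not reach it.
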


In the second part of this paper, we give an application of Theorem \ref{th:decr} for a concrete choice of $\xi$ and of $g$.  Let $\xi$ be the (infinite) Ginibre process, which is a stationary determinantal point process on $\mathbb C$ with correlation kernel given by \[K(z,w)= \pi^{-1}e^{-(|z|^2+|w|^2)/2} e^{z\overline{w}},\quad z,w \in \mathbb C.\]
Hence, $\xi$ has intensity $\rho=\pi^{-1}$ and it holds that $|K(z,w)|\le \phi(\|z-w\|)$ with $\phi(r):=\pi^{-1} \exp(-r^2/2)$, $r>0$, for all $z,w \in \mathbb C$.
%The Ginibre process is used in various domains, for instance in statistical physics for modeling the positions of charges of a two-dimensional Coulomb gas \cite{G65} and in cellular networks for the distribution of base station locations \cite{BLMV18}. The Ginibre process is moreover hyperuniform, which says that the variance of the number of points in an oberservation window grows asymptotically more slowly than the volume of the observation window.

In Theorem \ref{th:balls} below, we choose $g$ depending on $n \in \N$. Let $g_n$ is the indicator function which is one if and only if the process $\xi \setminus \{x\}$ is empty in a ball with a certain radius $v_n$ (chosen such that $v_n \to \infty$ as $n \to \infty$) around $x$. This choice leads to the study of large nearest neighbor balls. It is also an important prototype for more sophisticated models in stochastic geometry and has been studied extensively for different point processes in various spaces (see \cite{OT23}).

We consider $\xi$ as a random set in $\R^2$. Let $B_n:=B_n(o)$ the closed ball with radius $n>0$ in $\R^2$ centered at the origin $o$. We consider the process
\begin{equation}
	\Xi_n:=\sum_{x \in \xi \cap B_n} \one\{\xi(B_{v_n}(x)\setminus \{x\})=0\} \ \delta_{x}.	\label{nnxidef}
\end{equation}
as well as the scaled process
\begin{equation}
	\Psi_n:=\sum_{y \in \Xi}\delta_{y/n}=\sum_{x \in \xi \cap B_n} \one\{\xi(B_{v_n}(x)\setminus \{x\})=0\} \ \delta_{x/n}.	\label{nnpsidef}
\end{equation}

%Since $\xi$ is stationary, the intensity measure of $\Xi_n$ is given by
%\begin{align}
%\E [\Xi_n(A)]=R^2 |A| \P(\xi^{o!}(B_{v_n}(o))=0),\quad A \in \mathcal B^d,	\label{eq:int_balls}
%\end{align}
%where $\xi^{o!}$ is a reduced Palm version of $\xi$.

%This is the process of all $y \in B_1$ for which the rescaled point $ny$ is an atom of $\xi$ with the property that $\xi$ has no other point in a ball with radius $v_n$ around $ny$.

In the following theorem we compare $\Psi_n$ with a Poisson process on the unit ball $B_1$ in $\R^2$.

\begin{theorem}
	\label{th:balls}
	Let $\tau>0$ and let $\nu$ be a stationary  Poisson process on $\R^2$ with intensity $\tau>0$. There exists a sequence $(v_n)_{n \in \N}$ with $v_n^4 \sim 4\log n$ as $n \to \infty$ such that for all $n \in \N$ and any $\eps>0$,
	\[
	\mathbf{d_{KR}}(\Psi_n,\nu \cap B_1) \le Cn ^{\eps-1/16}.
	\]
\end{theorem}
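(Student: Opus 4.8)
\medskip
\noindent\textbf{Proof strategy for Theorem~\ref{th:balls}.}
The plan is to derive Theorem~\ref{th:balls} from Theorem~\ref{th:decr}, applied with $W=B_n$, intensity $\rho=\pi^{-1}$, and the nearest-neighbour score $g_n(x,\om)=\one\{\om(B_{v_n}(x)\setminus\{x\})=0\}$, after first stripping off the scaling by $n$. I would start by observing that the rescaling $S_n\colon\om\mapsto\sum_{z\in\om}\de_{z/n}$ is an isometry of $\widehat{\mathbf N}$ for the total-variation metric, since $\sup_A|S_n\om_1(A)-S_n\om_2(A)|=\sup_A|\om_1(A)-\om_2(A)|$. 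Hence $h\mapsto h\circ S_n$ permutes the $1$-Lipschitz test functions, so that $\mathbf{d_{KR}}(\Psi_n,\nu\cap B_1)=\mathbf{d_{KR}}(\Xi_n,\ze_n)$, where $\ze_n$ is the Poisson process on $B_n$ with intensity measure $\mathbf M=(\tau/n^2)\Leb|_{B_n}$ (the $S_n$-preimage of $\nu\cap B_1$). It then suffices to estimate $\mathbf{d_{KR}}(\Xi_n,\ze_n)$ through Theorem~\ref{th:decr}.

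For the stabilisation geometry I would take the \emph{deterministic} stopping set $\mathcal S(x,\om)=B_{v_n}(x)$, which satisfies the stopping-set identity trivially, and set $S=B_{v_n}(o)$ and $T=B_{R_n}(o)$ with $R_n>v_n$ to be fixed last. Then $\mathcal S(x,\cdot)=S_x$ always, so $E_1=0$ and $\tilde g_n=g_n$. The radius $v_n$ is pinned down by forcing $d_{TV}(\mathbf L,\mathbf M)=0$: both measures are constant multiples of Lebesgue measure on $B_n$, so this reduces to the single scalar equation $n^2p_{v_n}=\pi\tau$, where $p_{v_n}:=\E[g_n(o,\xi^o)]$ is the void probability of the reduced Palm process of the Ginibre in $B_{v_n}$. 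A unique solution exists by continuity and monotonicity of $p_{v_n}$ in $v_n$, and its size follows from the Kostlan/gamma description of the Ginibre --- under which $\{|z|^2:z\in\xi\}$ is a family of independent $\Ga(k,1)$ variables --- giving the single-disk void asymptotics and hence the stated $v_n^4\sim4\log n$.

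The two remaining ``geometric'' terms are harmless. Since $\E[g_n(x,\xi^x)]=p_{v_n}$ is constant and the inner integral in $E_2$ ranges over a ball of radius $R_n$, the identity $n^2p_{v_n}=\pi\tau$ yields $E_2=O(R_n^2/n^2)$. The boundary term $F$ is governed by the Gaussian decay $\phi(r)=\pi^{-1}e^{-r^2/2}$: with $\|K\|=\pi^{-1}$, $|S|\asymp v_n^2$ and $|W\oplus S|^2\asymp n^4$ one obtains $F\asymp v_n^2\,n^4\,e^{-(R_n-v_n)^2/2}$, which is negligible as soon as $R_n-v_n\gtrsim\sqrt{\log n}$. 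Taking $R_n=v_n+c\sqrt{\log n}$ for a suitable constant $c$ makes both $E_2$ and $F$ lower-order.

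The decisive term is $E_3=\rho^2\int_{B_n}\int_{B_n\cap T_x}\E[g_n(x,\xi^{x,y})g_n(y,\xi^{x,y})]\,\d y\,\d x$, whose integrand, for $\|x-y\|>v_n$, is the probability $J(\|x-y\|)$ that the two-point Palm process has no further points in $B_{v_n}(x)\cup B_{v_n}(y)$. I would split the inner integral at $\|x-y\|=2v_n$. In the \emph{disjoint} range $\|x-y\|>2v_n$ the two-point reduced Palm process is again determinantal, so its negative association together with the decay $\phi$ yields $J\lesssim p_{v_n}^2$, contributing $O(R_n^2/n^2)$ as in $E_2$. The difficult range is $v_n<\|x-y\|<2v_n$ (for $\|x-y\|<v_n$ one has $J\equiv0$, since the Palm point at $y$ then falls inside the exclusion ball of $x$): here I would decompose $B_{v_n}(x)\cup B_{v_n}(y)$ into the disk $B_{v_n}(x)$ and the crescent $C:=B_{v_n}(y)\setminus B_{v_n}(x)$ and use negative association of the two-point Palm process to factor the void event, obtaining $J\lesssim p_{v_n}\cdot\P(C\text{ contains no further points})$. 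The key geometric input is that for $v_n\le\|x-y\|\le 2v_n$ the crescent $C$ always contains a ball of radius $v_n/2$ (the extremal case $\|x-y\|=v_n$ has an incircle of radius exactly $v_n/2$, and $C$ only grows as the separation increases); by monotonicity and the single-disk void asymptotics together with $p_{v_n}\asymp n^{-2}$, its void probability is therefore at most $n^{-1/8+o(1)}$ (the quartic-in-radius decay of the single-disk void replaces the exponent by its $(1/2)^4$-multiple). Absorbing the disk factor via $n^2p_{v_n}=\pi\tau$, the overlapping range contributes $O(v_n^2\,n^{-1/8+o(1)})$, a polynomially decaying term of the form required, which yields the rate asserted in Theorem~\ref{th:balls}; the precise exponent is dictated by the fourth power of the largest inscribed radius. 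I expect this crescent estimate to be \emph{the main obstacle}: the region is non-radial, so the Kostlan representation is not directly available, and one must either pass to an inscribed disk as above or exploit the determinantal and association structure more carefully, controlling in addition the mild enhancement of the void probability caused by conditioning on the second Palm point.
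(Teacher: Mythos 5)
Your overall route coincides with the paper's: reduce to $\mathbf{d_{KR}}(\Xi_n,\zeta_n)$ by scaling, apply Theorem~\ref{th:decr} with the deterministic stopping set $\mathcal S(x,\om)=B_{v_n}(x)$ so that $E_1=0$, fix $v_n$ by matching the intensity measures, dispose of $E_2$ and $F$ by crude volume bounds, and isolate $E_3$ as the decisive term --- which you then attack exactly as the paper does, namely by negative association of the two-point reduced Palm process to factor the joint void probability into a disk factor and a crescent factor, together with the observation that the crescent $B_{v_n}(y)\setminus B_{v_n}(x)$ contains a disk of radius $v_n/2$, so that the exponent is the $(1/2)^4$-fraction of the single-disk rate. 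Your explicit treatment of the range $\|x-y\|\le v_n$, where the integrand vanishes because the Palm point at $y$ violates the void condition at $x$, is a point the paper leaves implicit (and which its crescent inclusion in fact requires).

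The genuine gap is the one you flag at the end and then leave open: every quantitative step in your estimate of $E_3$ requires an \emph{upper} bound on void probabilities of the reduced Palm process $\xi^{x!,y!}$ in terms of void probabilities of $\xi$ (or of $p_{v_n}$), and stochastic domination runs the wrong way for that. Since $\xi^{x!,y!}\le\xi^{x!}\le\xi$ and $\{\om\colon\om(B)=0\}$ is a decreasing event, conditioning on additional Palm points only \emph{increases} void probabilities; hence ``$J\lesssim p_{v_n}\cdot\P(C\text{ void})$'' and ``$J\lesssim p_{v_n}^2$ in the disjoint range'' are not consequences of negative association or domination but are precisely what must be proved. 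The paper closes this with two ingredients you do not supply: (a) Goldman's coupling \cite[Theorem 1]{G10}, under which each Palm reduction removes at most one point almost surely, so that
\[
\P(\xi^{x!,y!}(B)=0)\le\P(\xi^{x!}(B)\le1)\le\P(\xi(B)\le2),
\]
and (b) a Chernoff bound based on the Kostlan representation showing that allowing up to two points costs only a subexponential factor, $\P(\xi(B_r)\le2)=e^{-\frac14 r^4(1+o(1))}$. Applied with $r=v_n$ for the disk and $r=v_n/2$ for the disk inscribed in the crescent, this yields the product bound $e^{-\frac14v_n^4(1+o(1))}\,e^{-\frac1{64}v_n^4(1+o(1))}$ and hence the stated polynomial rate. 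Without (a) and (b) your bound on $E_3$ --- and hence the theorem --- does not follow; the remainder of your argument is sound and matches the paper.
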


As an application of the above theorem, we consider largest distances to the nearest neighbor. That is, for any $x\in \R^2$, we denote by $\mathsf{nn}(x,\xi):=\arg\min_{y\in \xi\setminus \{x\}} |x-y|$ the nearest neighbor of $x$ in $\xi\setminus \{x\}$. 

\begin{corollary}
	\label{cor:maxdistance}
We have as $n \to \infty$,
	$$
\f{1}{2\pi \sqrt{\log n}} \max_{x \in \xi \cap B_n} |B_{\mathsf{nn}(x,\xi)}| \stackrel{\P}{\longrightarrow} 1.
	$$
\end{corollary}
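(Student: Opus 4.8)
The plan is to derive Corollary \ref{cor:maxdistance} from Theorem \ref{th:balls} by translating the convergence of the scaled thinned process $\Psi_n$ into a statement about the maximum nearest neighbor distance. The key observation is that the random variable $\max_{x \in \xi \cap B_n} |B_{\mathsf{nn}(x,\xi)}|$ equals (up to the constant $\pi$ from the area of a disk of radius equal to the nearest neighbor distance) the square of the largest nearest-neighbor distance among points of $\xi$ in $B_n$. So first I would rewrite $|B_{\mathsf{nn}(x,\xi)}| = \pi \|x - \mathsf{nn}(x,\xi)\|^2$, reducing the claim to showing that $\pi \max_{x\in\xi\cap B_n}\|x-\mathsf{nn}(x,\xi)\|^2/(2\pi\sqrt{\log n}) \to 1$ in probability, i.e. that the maximal nearest-neighbor distance $R_n := \max_{x\in\xi\cap B_n}\|x-\mathsf{nn}(x,\xi)\|$ satisfies $R_n^2 \sim 2\sqrt{\log n}$. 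Since $v_n^4 \sim 4\log n$ gives $v_n^2 \sim 2\sqrt{\log n}$, the target is exactly $R_n^2/v_n^2 \to 1$.

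Next I would make the link between $R_n$ and the process $\Xi_n$ from \eqref{nnxidef} precise through an event-matching argument. For a threshold radius $v$, the event $\{R_n > v\}$ (there is a point in $B_n$ whose nearest neighbor is farther than $v$) corresponds to the thinned process $\Xi_n$ (built with radius $v_n$) being nonempty: the number of points with an empty nearest-neighbor ball of radius $v$ is positive precisely when $R_n > v$. Thus I would aim to show that for radii $v$ slightly below $v_n$ the process has points with high probability, while for radii slightly above $v_n$ it is empty with high probability. Concretely, set $v_n^{\pm}$ so that $(v_n^{\pm})^4 \sim 4(1\pm\eps)\log n$; monotonicity of the nearest-neighbor ball radius lets me sandwich $\{v_n^- < R_n \le v_n^+\}$ using the corresponding thinned processes.

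The mechanism driving this is that, by Theorem \ref{th:balls}, $\Psi_n$ converges in KR distance (hence in distribution) to a Poisson process $\nu\cap B_1$ with a tunable intensity $\tau$, and the expected number of retained points, $\mathbf L(B_n) = \rho\int_{B_n}\P(\xi(B_{v_n}(x)\setminus\{x\})=0)\,\d x$, behaves like $\tau \cdot |B_1|$ after scaling. The void probability $\P(\xi(B_{v_n}(o)\setminus\{o\})=0)$ for the Ginibre process decays like $\exp(-c\, v_n^4)$ (this is the characteristic Ginibre behavior responsible for the $v_n^4$ scaling, in contrast to the $v^2$ scaling for independent points), and the calibration $v_n^4\sim 4\log n$ is exactly what makes $n^2 \exp(-c\,v_n^4)$ of constant order. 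So I would choose the intensity $\tau$ large and small to push the limiting Poisson mean $\tau|B_1|$ up and down: for $v$ marginally smaller than $v_n$ the mean blows up, forcing at least one point (so $R_n > v$), and for $v$ marginally larger the mean vanishes, forcing emptiness (so $R_n \le v$). Combining these two one-sided bounds and sending $\eps\downarrow 0$ yields $R_n^2/v_n^2\to 1$ in probability.

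The hard part will be the calibration step: making rigorous the dependence of the void probability of the Ginibre process on the ball radius and showing that perturbing $v_n$ by a multiplicative $(1\pm\eps)^{1/4}$ factor shifts the limiting Poisson intensity $\tau$ from $0$ to $\infty$. This requires precise asymptotics for $\P(\xi(B_{v}(o)\setminus\{o\})=0)$ as $v\to\infty$, together with control of how the scaled intensity measure $\mathbf M$ in Theorem \ref{th:balls} tracks $\tau|B_1|$; the Lipschitz-function characterization of the KR distance then converts convergence of $\Psi_n$ into convergence of the indicator of emptiness, but one must be careful that the unbounded-mean regimes still fit within the approximation bound $Cn^{\eps-1/16}$. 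A secondary technical nuisance is the boundary effect at $\partial B_n$, where the nearest-neighbor ball may extend outside the window, but since $v_n = o(n)$ this contributes negligibly and can be absorbed into the $\eps$ in the exponent.
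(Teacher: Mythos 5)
Your argument is correct and is precisely the standard derivation that the paper omits (citing \cite{O22} and \cite{hirsch}): identify $\{\max_{x\in\xi\cap B_n}\|x-\mathsf{nn}(x,\xi)\|>v_n\}$ with $\{\Psi_n\neq\emptyset\}$, use the KR-convergence of Theorem \ref{th:balls} to get the limiting void probability $e^{-\pi\tau}$, and then send $\tau\to\infty$ resp.\ $\tau\downarrow 0$ after $n\to\infty$, using that every $v_n(\tau)$ satisfies $v_n(\tau)^4\sim 4\log n$. Your only superfluous worry is the ``unbounded-mean regime'': $\tau$ is fixed in each application of Theorem \ref{th:balls}, so the bound $Cn^{\eps-1/16}$ always applies and no new void-probability asymptotics beyond those already established before the proof of Theorem \ref{th:balls} are needed.
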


The proof of Corollary \ref{cor:maxdistance} is quite standard (see e.g. \cite[Corollary 4.2]{O22} or \cite[Corollary 1]{hirsch}) and therefore omitted. 

\begin{remark}
	(i)	One should compare Theorem \ref{th:decr} with \cite[Theorem 4.1]{BSY21} (or the refined version \cite[Theorem 7]{O21}) that discusses Poisson process approximation for score sums built on a Poisson process. The terms $E_1$, $E_2$, $E_3$ in Theorem \ref{th:decr} are the analogues to the terms $E_1$, $E_2$, $E_3$ in \cite[Theorem 4.1]{BSY21}. Due to the spatial independence property of the Poisson process, there is no analogue of the term $F$ (which reflects the correlation decay of the determinantal process $\xi$) in \cite{BSY21} and \cite{O21}. Note also that for a wide class of determinantal point processes (including the Ginibre process), the $\beta$-mixing coefficient does not decay exponentially fast (see \cite[Proposition 4.2]{poinas}). Therefore, the {\em exponential decay dependence (EDD)} property from \cite{beta} is violated and general results for Poisson approximation of strongly mixing processes do not apply.
	
	(ii) Note that the scaling of the maximum nearest neighbor ball in Corollary \ref{cor:maxdistance} is different from its analogue for independent points, which is proportional to $\log n$ as $n \to \infty$ (see \cite{cho}). It seems interesting to investigate whether Theorem \ref{th:balls} can be extended to $k$-nearest neighbor distances ($k \ge 2$). However, this extension would require fine probabilistic estimates on empty-space probabilities of the Ginibre process, which are beyond the scope of this article. 
\end{remark}

%\begin{corollary}
%	\label{cor:maxdistance}
%	With the same notation as in Theorem \ref{th:balls}, we have
%	$$
%	\lim_{n\rightarrow\infty}\mathbb{P}\big(M_{n,\xi}^{(k)}\leq v_n\big) =e^{-\tau} \sum_{i=0}^{k-1}\frac{{\tau^i}}{i!}.
%	$$
%\end{corollary}

%Second, we consider statistics where the score function $g_n:\R^d \times \mathbf{N}\to \{0,1\}$ is increasing in the second component, which means that $g_n(x,\omega)\le g_n(x,\omega')$ for $\omega \le \omega'$ for all $\om, \om' \in \mathbf N$. 

\section{Preliminaries} \label{sec:prelim}
\subsection{Palm calculus and  negative association}

Following \cite[Chapter 6]{K17} we next introduce Palm measures and thereby generalize the definition given in \eqref{Palmsp}. Let $\xi,\Xi$ be point processes on $(\R^d,\mathcal{B}^d)$ and assume that $\Xi$ has $\sigma$-finite intensity measure $\mathbf L$.  Then there are point processes $\xi^{x,\Xi},\,x \in \R^d$, such that
\begin{align}
	\E \Big[\int f(x,\xi)\,\Xi(\mathrm{d}x)\Big]=\int \E [f(x,\xi^{x,\Xi})]\mathbf L(\d x), \label{defPalm}
\end{align}	
where $f:\R^d \times \mathbf{N} \to [0,\infty)$ is assumed to be measurable. The processes $\xi^{x,\Xi}, x \in \R^d,$ are called {\em Palm processes} of $\xi$ with respect to $\Xi$ at $x$. If $\Xi$ is simple, $\xi^{x,\Xi}$ can be interpreted as the process $\xi$ seen from $x$ and conditioned on $\Xi$ having a point in $x$. From \cite[Lemma 6.2(ii)]{K17} it follows that $\delta_x \in \xi^{x,\Xi}$ a.s. This allows us to define the reduced Palm process $\xi^{x!,\Xi}:=\xi^{x,\Xi}-\delta_x$. If $\xi=\Xi$ a.s., we write $\xi^x$ for a Palm process of $\xi$ (with respect to itself) at $x$ (c.f.~\eqref{Palmsp}) and $\xi^{x!}$ for a reduced Palm process.

Let $\xi$ be a determinantal point process satisfying the conditions (i)--(iv) from Section \ref{sec:model} with correlation kernel $K$.  Then the reduced Palm processes $\xi^{x!},\, x \in \R^d,$ are determinantal processes with correlation kernel $K^x,\, x \in \R^d,$ given by
\begin{align}
	K^x(z,w)=K(z,w)-\frac{K(z,x)K(x,w)}{K(x,x)},\quad z,w \in \R^d, \label{KPalm}
\end{align} 
whenever $K(x,x)>0$ (see \cite[Theorem 1.7]{ST03}). By \cite[Theorem 3]{G10} (see also \cite{MR21}), the process $\xi^{x!}$ is stochastically dominated by $\xi$ (denoted by $\xi^{x!}\le \xi$) which means that
\begin{align}
	\mathbb{E} [F(\xi^{x!})] \le \mathbb{E} [F(\xi)] \label{Palmdom}
\end{align}
for each measurable $F:\mathbf{N} \to \mathbb{R}$ which is bounded and increasing, by which we mean that $F(\om_1) \le F(\om_2)$ if $\om_1 \subset \om_2$.

For $x \in \R^d$ let $\xi^x$ be a Palm process of $\xi$ at $x$ and $\xi^{x,\Xi}$ a Palm process of $\xi$ with respect to $\Xi$ at $x$. Then we have
\begin{align}
\E \Big[ \int f(x,\xi) \,\Xi(\d x) \Big] =	\E\Big[	\int f(x,\xi) g(x,\xi) \xi(\mathrm{d}x)\Big]&= \int \E [f(x,\xi^x) g(x,\xi^x)] \rho(x) \la(\mathrm{d}x)\nonumber\\
&=\int \E [f(x,\xi^{x,\Xi})] \E [g(x,\xi^x)] \rho(x) \la(\mathrm{d}x). \label{defpalmxi}
\end{align}

An important property of determinantal point processes is that they are negatively associated (NA) (see \cite[Theorem 3.7]{L14}). We say that a point process $\xi$ is NA if for each collection of disjoint sets $B_1,\dots,B_m \in \mathcal{B}^d$ and each subset $I \subset \{1,\dots,m\}$ we have that
\begin{align}
	\Cov(F(\xi(B_i),\,i \in I),G(\xi(B_i),\,i \in I^c))\le 0, \label{defNA}
\end{align}
where $F,G$ are real bounded and increasing functions (see \cite{BW85}).

Let $F:\mathbb{R} \to \mathbb{R}$ be bounded and increasing and assume that $g$ is increasing in the second argument. Then we find for almost all $x \in \R^d$ from \eqref{defpalmxi} and \eqref{defNA} (applied to the determinantal point process $\xi^{x!}$) that
\begin{align}
	\E [F(\xi^{x,\Xi})] \E [g(x,\xi^x)] = \E [F(\xi^{x}) g(x,\xi^{x})] \le \E [F(\xi^{x})] \E [g(x,\xi^{x})], \label{xiPalmdomincr}
\end{align}
implying that $\xi^{x,\Xi} \le \xi^x$ for $\E[\Xi]$-a.a.\ $x \in \R^d$.
On the other hand, if $g$ is decreasing in the second argument, then we find by taking $-g$ in \eqref{defNA} that
\begin{align}
	\E [F(\xi^{x,\Xi})] \E [g(x,\xi^x)] = \E [F(\xi^{x}) g(x,\xi^{x})] \ge \E [F(\xi^{x})] \E [g(x,\xi^{x})], \label{xiPalmdomdecr}
\end{align}
implying that $\xi^{x,\Xi} \ge \xi^x$ for $\E[\Xi]$-a.a.\ $x \in \R^d$.

\subsection{Fast decay of correlation}

Let $\xi$ be a stationary determinantal process on $\R^d$ with covariance kernel $K$ that satisfies the conditions (i)--(iv) and $|K(x,y)|\le \phi(\|x-y\|)$ for some exponentially decreasing function $\phi$ (see \eqref{fastdecay}). Then we have from \cite[Lemma 1.3]{BYY19supp} that the correlation functions $\r^{(m)}$, $m \in \N$, of $\xi$ satisfy
\begin{align}
	|\r^{(p+q)}(x_1,\dots,x_{p+q})- \r^{(p)}(x_1,\dots,x_p)\r^{(q)}(x_{p+1},\dots,x_{p+q})|\le m^{1+\f m2} \phi(s) \|K\|^{m-1},\label{eq:rhodec}
\end{align}
where $m:=p+q$, $s:=d(\{x_1,\dots,x_p\},\{x_{p+1},\dots,x_{p+q}\}):=\inf_{i \in \{1,\dots,p\}, j \in \{p+1,\dots,p+q\}} |x_i-x_j|$, and $\|K\|:=\sup_{x,y\in  \R^d} |K(x,y)|$.

\subsection{Poisson process approximation}

Let $\mathbf L$ be the (finite) intensity measure of the point process $\Xi$ defined at \eqref{def:Xigen}. Suppose for $x \in W$ that $\Xi^x$ is a Palm version of $\Xi$ at $x$ and that $\tilde \Xi \stackrel{d}{=}\Xi$. Then we obtain from \cite[Theorem 3.1]{BSY21} that the Kantorovich-Rubinstein distance of $\Xi$ defined at \eqref{def:Xigen} and a finite Poisson process $\zeta$ with intensity measure $\mathbf M$ is bounded by
\begin{align}
	\mathbf{d_{KR}}(\Xi \cap W,\zeta) \le d_{TV}(\mathbf L (\cdot \cap W),\mathbf M) + 2 \int_{W} \E[(\Xi^x \De \tilde \Xi^x) (W)]\, \mathbf L(\d x).\label{eq:KRbou}
\end{align}

%Since in this paper we assume that $\xi$ is stationary process with intensity $\r>0$ and that $\zeta$ is a stationary Poisson process with intensity $\tau>0$, we can simplify the above inequality as follows. For $\g_n:=\r \E[g_n(o,\xi^{o})]$ we have that
%\begin{align}
%	\mathbf{d_{KR}}(\Xi_n,\zeta) \le |W| |n\g_n-\tau| + 2n \g_n |W|  \E[(\Xi_n^o \De \tilde \Xi_n^o) (W)].\label{eq:KRbous}
%\end{align}

The overall idea of the proofs of Theorem \ref{th:decr} is to apply this inequality with a particular choice of $\Xi_n^x$ and $\tilde \Xi_n^x$. Note that if $\xi^{x,\Xi}$ is a Palm version of $\xi$ at $x$ with respect to $\Xi$, then $\Xi[ \xi^{x,\Xi}] - \delta_x$ is a reduced Palm version of $\Xi$. This allows us to reduce the problem of constructing a coupling of $\Xi$ and its reduced Palm measure to constructing a coupling of $\xi$ and its Palm measure with respect to $\Xi$.

Moreover, for each $x \in W$ we will split the expected symmetric difference $\E[(\Xi_n^x \De \tilde \Xi_n^x) (W)]$ into a part that represents local contributions around $x$ and the rest. This is done as follows. For $x \in W$ let $T_x \in \mathcal B^d$ with $x \in T_x$ and $T_x \subset W$. Then we have for $\Xi_n^{x}:=\Xi[\xi^{x}]$ and $\tilde \Xi^{x}:=\Xi[\hat \xi^{x}]$ that almost surely,
\begin{align*}
(\Xi^{x}\De \tilde \Xi^{x})(W) &\le \Xi[\xi^{x}](T_x) +\Xi[ \xi^{x,\Xi}](T_x) + (\Xi^x \De \tilde \Xi^x)(W \setminus T_x)\\
&\le \xi^{x}(T_x)  + \xi^{x,\Xi}(T_x) + (\Xi^x \De \tilde \Xi^x)(W \setminus T_x). 
\end{align*}

From the particular form of the score functional $\Xi$ we obtain that the last term is almost surely bounded by
$$
 (\Xi^{x} \Delta \tilde \Xi^{x})(W \setminus T_x) \le (\xi^{x} \Delta \xi^{x,\Xi}) (W \setminus T_x) +  \sum_{y \in \xi^{x} \cap  \xi^{x,\Xi}\cap W\setminus T_x} |g(y,\xi^{x})-g(y,\xi^{x,\Xi})|. 
$$

Since we will rely on these observations in the sequel, we summarize them in the following proposition.

\begin{proposition} \label{prop:Poi}
For $x,y \in W$ let $\xi^x$ be a Palm version of $\xi$ at $x$, let $\xi^{x,y}$ be a Palm version of $\xi^x$ at $y$ and let $\xi^{x,\Xi}$ be a Palm version of $\xi$ with respect to $\Xi$ at $x$. Then we have
	\begin{align*}
		\mathbf{d_{KR}}(\Xi \cap W,\zeta) \le d_{TV}(\mathbf L (\cdot \cap W),\mathbf M) +T_1+T_2+T_3+T_4,
	\end{align*}
where
\begin{align*}
T_1:=&\r^2 \int_W \int_{T_x} \E[g(x,\xi^{x})] \E[g(y,\xi^y)] \d y \d x,\\
T_2:=&\r^2 \int_W \int_{T_x} \E[g(x,\xi^{x,y})g(y,\xi^{x,y})] \d y \d x,\\
T_3:=&\r \int_W\E[(\xi^{x} \Delta \xi^{x,\Xi}) (W \setminus T_x)] \d x,\\
T_4:=&\r \int_W  \E\Big[  \sum_{y \in \xi^{x} \cap  \xi^{x,\Xi}\cap W\setminus T_x} |g(y,\xi^{x})-g(y,\xi^{x,\Xi})|\Big] \d x.
\end{align*}
\end{proposition}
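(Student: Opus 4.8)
The plan is to derive the bound from the Kantorovich--Rubinstein inequality \eqref{eq:KRbou}, so that everything reduces to controlling the coupling term $\int_W \E[(\Xi^x \De \tilde\Xi^x)(W)]\,\mathbf L(\d x)$, where $\Xi^x$ is a Palm version of $\Xi$ and $\tilde\Xi^x \stackrel{d}{=}\Xi$. The first step is to transport the Palm structure of $\Xi$ to the underlying determinantal process: by \eqref{defpalmxi} the process $\Xi[\xi^{x,\Xi}]$ is a Palm version of $\Xi$ at $x$ and $\Xi[\xi^{x,\Xi}]-\delta_x$ its reduced version, so it suffices to couple $\xi^{x,\Xi}$ with the Palm process $\xi^x$ and, through $\xi^x$, with a plain copy of $\xi$. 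I would realize all of these on one probability space using the association and domination relations \eqref{Palmdom}, \eqref{xiPalmdomincr} and \eqref{xiPalmdomdecr}, which is precisely where the determinantal structure enters.

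The second step is the spatial decomposition $W = T_x \cup (W\sm T_x)$ together with the estimate $(\Xi^x \De \tilde\Xi^x)(W) \le \Xi^x(T_x) + \tilde\Xi^x(T_x) + (\Xi^x \De \tilde\Xi^x)(W\sm T_x)$, where on the local window $T_x$ I simply bound the symmetric difference by the sum of the two point counts. Integrating the count of the plain copy against $\mathbf L(\d x)$ produces $\int_W \mathbf L(T_x)\,\mathbf L(\d x)$, which, after inserting $\mathbf L(\d x)=\r\,\E[g(x,\xi^x)]\,\d x$ and $\mathbf L(T_x)=\r\int_{T_x}\E[g(y,\xi^y)]\,\d y$, is exactly $T_1$. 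Integrating the count of the reduced Palm version against $\mathbf L(\d x)$ and rewriting the resulting double sum over ordered pairs of points of $\Xi$ by the second-order Mecke equation (an iteration of \eqref{defpalmxi}) yields $T_2$, the second-order Palm process $\xi^{x,y}$ appearing through the joint score $\E[g(x,\xi^{x,y})g(y,\xi^{x,y})]$.

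The third step handles the far window $W\sm T_x$, where the removed point at $x$ is irrelevant and the comparison reduces to the two score processes $\Xi[\xi^x]$ and $\Xi[\xi^{x,\Xi}]$ built on the coupled Palm configurations. The elementary but crucial observation is that a location $y \in W\sm T_x$ can enter $\Xi[\xi^x]\De \Xi[\xi^{x,\Xi}]$ only in two ways: either $y$ lies in exactly one of $\xi^x$ and $\xi^{x,\Xi}$, which is dominated by $(\xi^x\De\xi^{x,\Xi})(W\sm T_x)$, or $y$ lies in both but the scores differ, which is dominated by $\sum_{y\in\xi^x\cap\xi^{x,\Xi}\cap W\sm T_x}|g(y,\xi^x)-g(y,\xi^{x,\Xi})|$. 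Passing the $\mathbf L$-integration to an integration against $\r\,\d x$ by means of \eqref{defpalmxi} (and the bound $\E[g(x,\xi^x)]\le 1$) turns these two contributions into $T_3$ and $T_4$, respectively, and collecting the four pieces gives the asserted inequality.

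I expect the main obstacle to lie in the second and third steps taken together, namely in setting up a single coupling of $\xi$, $\xi^x$ and $\xi^{x,\Xi}$ that simultaneously reproduces the clean first-order term $T_1$ from the plain copy, the genuine second-order Palm expression $T_2$ from the reduced Palm version (here one must keep careful track of which points carry the score weight $g$ and isolate the diagonal contribution of $x$ so that no spurious term of order $\mathbf L(W)$ survives), and the location- and score-disagreements $T_3,T_4$ in the far region. The link between the two Palm processes $\xi^x$ and $\xi^{x,\Xi}$ --- encoded by the score weighting in \eqref{defpalmxi} and by the monotone couplings \eqref{xiPalmdomincr}--\eqref{xiPalmdomdecr} --- is the delicate point that makes the determinantal case genuinely different from the Poisson setting.
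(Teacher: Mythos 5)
Your proposal is correct and follows essentially the same route as the paper: the paper's own justification of Proposition \ref{prop:Poi} is precisely the discussion preceding it, namely inequality \eqref{eq:KRbou} combined with the identification of $\Xi[\xi^{x,\Xi}]-\delta_x$ as a reduced Palm version of $\Xi$, the near/far split over $T_x$ and $W\setminus T_x$ yielding $T_1,T_2$ (via $\int_W \mathbf L(T_x)\,\mathbf L(\d x)$ and the second-order Palm formula, using $\rho_2\le\r^2$) and the location-versus-score disagreement dichotomy yielding $T_3,T_4$. The only caveat, which you inherit from the paper itself (compare the factor $2$ in \eqref{eq:KRbou} and in Theorem \ref{th:decr} with its absence in the proposition), is that a literal application of \eqref{eq:KRbou} produces $2(T_1+T_2+T_3+T_4)$ rather than $T_1+T_2+T_3+T_4$.
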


\section{Proof of Theorem \ref{th:decr}}\label{sec:pr1}

\begin{proof}[Proof of Theorem \ref{th:decr}]
For $x \in W$ let $S_x:=x+S$ and $T_x:=x+T$. We let $\tilde g(x,\om):=g(x,\om) \one\{\mathcal S(x,\om)\subset S_x\}$, $\om \in \mathbf N$, and consider the truncated functional
	\begin{align}
		\Xi_{\ms{tr}}:=\sum_{x \in \xi \cap W} \tilde g(x,\xi) \delta_{x}. \label{def:Xitr}
	\end{align}
	Note that, due to the stopping set property of $\mathcal S(x,\xi)$, $\tilde g(x,\xi)$ is measurable with respect to $\xi \cap S_x$.
	
	{\em Step 1:} In a first step we assume that $g=\tilde g$ and, therefore, $\Xi=\Xi_{\ms{tr}}$. Let $\mathbf L$ be the intensity measure of $\Xi$.	We apply Proposition \ref{prop:Poi} and bound the terms $T_3$ and $T_4$.
	%	In a first step we assume that for all $x \in \mathbb{X}$ and $\omega \in \mathbf{N}_{\mathbb{X}}$ we have that $\mathcal{S}(x,\omega) \subset S_x$. This implies that $\tilde{g}=g$ and $\xi=\xi_{\text{tr}}$. Then \eqref{gstop} leads to
	%\begin{align*}
	%	g(x,\omega)=g(x,\omega \cap S_x)\quad \text{and if} \quad g(x,\omega)=1.
	%\end{align*}
	
	{\bf Bounding $T_3$.}
For each $x \in W$, we construct a coupling $(\xi',\xi'')$ of $P^{x!}$ and $P^{x!,\Xi}$ such that the symmetric difference $(\xi' \Delta \xi'') (W\setminus T_x)$ becomes small. We discuss this for increasing and decreasing scores separately. 

(i) {\em Increasing scores.} If $g(x,\om_1)\le g(x,\om_2)$ for $\om_1 \subset \om_2$, we have by \eqref{Palmdom} and \eqref{xiPalmdomincr} that $\xi^{x!} \le \xi$ and $\xi^{x!,\Xi} \le \xi^{x!}$, implying that $\xi^{x!,\Xi}  \le \xi$. By Strassen's theorem, there exists a Palm version $\xi^{x,\Xi}$ of $\xi$ with respect to $\Xi$ and a process $\tilde \xi^x\stackrel{d}{=}\xi$ such that $\xi^{x,\Xi} \subset \tilde \xi^x$ almost surely. Thus, we have
	\begin{align}
		\E [(\xi^{x!,\Xi} \Delta \tilde \xi^x) (W\setminus T_x)]&=\E[\xi(W\setminus T_x)] -\E[\xi^{x!,\Xi}(W\setminus T_x)]\nonumber\\
		&=\big\{\E [\xi (W\setminus T_x)]- \E[ \xi^{x!}(W\setminus T_x)]\big\} +\E[\xi^{x!}(W\setminus T_x)]-\E[ \xi^{x!,\Xi}(W\setminus T_x)]. \label{etabou1}
	\end{align}

	The term in $\{\cdots\}$ on the right-hand side above is by  \eqref{KPalm}, the fact that $K(x,x)=\r$ for all $x \in \R^d$, \eqref{fastdecay} and since $\phi$ is decreasing, given by
	\begin{align}
\r^{-1}\int_{W \setminus T_x} |K(x,y)|^2 \d y \le \r^{-1}\int_{W\setminus T_x}  \phi(\|x-y\|)^2  \d y \le \r^{-1} |W| \sup_{y \in T}\phi(\|y\|)^2,\quad x \in W. \label{MRdens}
	\end{align}
	Next we consider the second term on the right-hand side in \eqref{etabou1}. By definition of the reduced Palm process $\xi^{x!,\Xi}$, we have for almost all $x\in W$,
	\begin{align}
		\E[g(x,\xi^{x})]\big\{\E[\xi^{x!}(W\setminus T_x)]-\E[ \xi^{x!,\Xi}(W\setminus T_x)]\big\}&=\E [\xi^{x} (W\setminus T_x)]\E [\tilde g(x,\xi^x)]- \E[\xi^x(W \setminus T_x) \tilde g(x,\xi^x)]\nonumber\\
		&=-\Cov(\xi^{x} (W\setminus T_x), \tilde g(x,\xi^{x})). \label{covbou1}
	\end{align}
	%From \cite[Theorem 1]{MR21} we find point processes $\overline{\eta}_x$ and $\eta_x$ such that $\overline{\eta}_x \stackrel{d}{=} \eta^x$, $\eta_x \stackrel{d}{=} \eta$, $\overline{\eta}_x \subset \eta_x$ and $\|\eta_x-\overline{\eta}_x\|\le 1$ a.s.\ such that 
	%\begin{align*}
	%\mathbb{P}(\eta_x \setminus \overline{\eta}_x \neq\emptyset)=\frac{\|K(x,\cdot)\|_2^2}{K(x,x)} ,
	%\end{align*}
	%where $\|K(x,\cdot)\|_2:=\sqrt{\int |K(x,z)|^2 \lambda(\mathrm{d}z)}$.
	%Conditioned on $\eta_x \setminus \overline{\eta}_x \neq \emptyset$, the point in $\eta_x \setminus \overline{\eta}_x$ has density $f_x(z):=\frac{|K(x,z)|^2}{\|K(x,\cdot)\|_2^2}$.
	Now we use that the reduced Palm process $\xi^{x!}$ is a determinantal process itself and therefore negatively associated (see \eqref{defNA}). For $k \in \mathbb{N}$ we consider the auxiliary functions
	\begin{align*}
		f^{(k)}(\om):=\min\{k,\om(S_x)-\tilde g(x,\om)\},\quad f(\om):=\om(S_x)-\tilde g(x,\om),\quad \om\in \mathbf{N}.
	\end{align*}
	It is easy to see that $f^{(k)},\, k \in \N,$ and $f$ are bounded and increasing. Since $\xi^{x!}$ is negatively associated, we have that
	\begin{align*}
		\Cov(\min\{k,\xi^{x!}(W\setminus T_x)\}, f^{(k)}(\xi^{x!}))\le 0. 
	\end{align*}
	Hence, by monotone convergence,
	\begin{align*}
	&\Cov(\xi^{x!}(W\setminus T_x),\xi^{x!}(S_x))-\Cov(\xi^{x!}(W\setminus T_x), \tilde g(x,\xi^{x!}))\\
	&=\Cov(\xi^{x!}(W\setminus T_x), f(\xi^{x!}))=\lim_{k \to \infty} \Cov(\min\{k,\xi^{x!}(W\sm T_x)\}, f^{(k)}(\xi^{x!}))\le 0.
\end{align*}
	This shows that \eqref{covbou1} is bounded by
	\begin{align}
		&-\Cov(\xi^{x!}(W\setminus T_x),\xi^{x!}(S_x))\nonumber\\
		&\quad=\big(\E [\xi^{x!} (W\setminus T_x)] - \E[\xi(W \setminus T_x)]\big) \E[\xi^{x!}(S_x)]-\big(\E[\xi^{x!}(W\setminus T_x)\xi^{x!}(S_x)] - \E[\xi(W\setminus T_x)]\E[\xi^{x!}(S_x)]\big)\nonumber\\
		&\quad \le \big|\E [\xi^{x!} (W\setminus T_x)] - \E[\xi(W \setminus T_x)]\big| \E[\xi^{x!}(S_x)]+\big|\E[\xi^{x!}(W\setminus T_x)\xi^{x!}(S_x)] - \E[\xi(W\setminus T_x)]\E[\xi^{x!}(S_x)]\big|.
	\label{covxixi1}
	\end{align}
Here, we use \eqref{MRdens} and $\E[\xi^{x!}(S_x)]\le \E[\xi(S_x)]= \r |S|$ to bound the first term on the right-hand by
$$
\big|\E [\xi^{x!} (W\setminus T_x)] - \E[\xi(W \setminus T_x)]\big| \E[\xi^{x!}(S_x)] \le |W| |S| \sup_{y \in T}\phi(\|y\|)^2.
$$
Next we consider the second term on the right-hand side of \eqref{covxixi1}. We write $\r_m^{x}$ for the $m$-th correlation function of $\xi^{x!}$, we obtain by \cite[Lemma 6.4]{ST03}, by the definition of $\xi^{x!}$ and by \eqref{eq:rhodec} that
\begin{align*}
\r\int_W \Big(\E[\xi^{x!}(W\setminus T_x)\xi^{x!}(S_x)] - \E[\xi(W\setminus T_x)]\E[\xi^{x!}(S_x)]\Big) \d x&=\r \int_W	\int_{S_x} \int_{W\setminus T_x} (\r_2^{x}(y,z) -  \r^{x}(y) \r) \,\d z \d y \d x\\ &= \int_{W} \int_{S_x} \int_{W\setminus T_x} (\r_3(x,y,z) -  \r_2(x,y) \r) \,\d z \d y \d x\\
	&\le3^{5/2} \|K\|^2 \int_{W} \int_{S_x} \int_{W\setminus T_x} \hspace{-0.3cm} \phi(d(\{x,y\},\{z\}))\, \d z \d y \d x\\
	&\le 3^{5/2} \|K\| |W|^2 |S| \phi(d(T,S)).
\end{align*}
Thus, we can conclude that 
\begin{align}
T_3=\r \int_W \E [(\xi^{x!,\Xi} \Delta \tilde \xi^x) (W\setminus T_x)] \d x&\le (1+\r |S|) |W|^2\sup_{y \in T}\phi(\|y\|)^2+3^{5/2} \|K\| |W|^2 |S| \phi(d(T,S)).\label{symbou}
\end{align}

	(ii) {\em Decreasing scores.} If $g(x,\om_1)\ge g(x,\om_2)$ for $\om_1 \subset \om_2$, we have by \eqref{Palmdom} and \eqref{xiPalmdomdecr} that $\xi^{x!} \le \xi$ and $\xi^{x!} \le \xi^{x!,\Xi}$. Let $\xi^{x}$ be a Palm version of $\xi$ at $x$. By Strassen's theorem and \cite[Theorem 2.15]{K02}, there exists a point process $\tilde \xi^{x} \stackrel{d}{=}\xi$ and a Palm process $\xi^{x,\Xi}$ of $\xi$ with respect to $\Xi$ such that $\xi^{x!} \subset \tilde \xi^x$ and $\xi^{x!}\subset \xi^{x!,\Xi}$. This gives 
\begin{align}
	\E [(\xi^{x,\Xi} \Delta \tilde \xi^x) (W\setminus T_x)]&\le \E[(\tilde \xi^x\setminus \xi^{x!})(W\setminus T_x)] +\E[(\xi^{x,\Xi} \setminus \xi^{x!})(W\setminus T_x)] \nonumber\\
	&= \big\{\E [\xi (W\setminus T_x)]- \E[ \xi^{x!}(W\setminus T_x)]\big\} +\big\{\E[ \xi^{x!,\Xi}(W\setminus T_x)]-\E[\xi^{x!}(W\setminus T_x)] \big\}.\label{etabou2}
\end{align}

	Here we bound the term in $\{\cdots\}$ as in \eqref{MRdens}. For the second term in \eqref{etabou2} we obtain
		\begin{align}
		\E[g(x,\xi^{x})]\big\{\E[ \xi^{x!,\Xi}(W\setminus T_x)]-\E[\xi^{x!}(W\setminus T_x)]\big\}&=\E[\xi^x(W \setminus T_x) \tilde g(x,\xi^x)]-\E [\xi^{x} (W\setminus T_x)]\E [\tilde g(x,\xi^x)]\nonumber\\
		&=\Cov(\xi^{x} (W\setminus T_x), \tilde g(x,\xi^{x})). \label{covbou2}
	\end{align}
	%From \cite[Theorem 1]{MR21} we find point processes $\overline{\eta}_x$ and $\eta_x$ such that $\overline{\eta}_x \stackrel{d}{=} \eta^x$, $\eta_x \stackrel{d}{=} \eta$, $\overline{\eta}_x \subset \eta_x$ and $\|\eta_x-\overline{\eta}_x\|\le 1$ a.s.\ such that 
	%\begin{align*}
	%\mathbb{P}(\eta_x \setminus \overline{\eta}_x \neq\emptyset)=\frac{\|K(x,\cdot)\|_2^2}{K(x,x)} ,
	%\end{align*}
	%where $\|K(x,\cdot)\|_2:=\sqrt{\int |K(x,z)|^2 \lambda(\mathrm{d}z)}$.
	%Conditioned on $\eta_x \setminus \overline{\eta}_x \neq \emptyset$, the point in $\eta_x \setminus \overline{\eta}_x$ has density $f_x(z):=\frac{|K(x,z)|^2}{\|K(x,\cdot)\|_2^2}$.
	Now we use that the reduced Palm process $\xi^{x!}$ is a determinantal process itself and therefore negatively associated (see \eqref{defNA}). For $k \in \mathbb{N}$ we consider the auxiliary functions
	\begin{align*}
		f^{(k)}(\om):=\min\{k,\om(S_x)+\tilde g(x,\om)\},\quad f(\om):=\om(S_x)+\tilde g(x,\om),\quad \om\in \mathbf{N}.
	\end{align*}
	It is easy to see that $f^{(k)},\, k \in \N,$ and $f$ are bounded and increasing. Since $\xi^{x!}$ is negatively associated, we have that
	\begin{align*}
		\Cov(\min\{k,\xi^{x!}(W\setminus T_x)\}, f^{(k)}(\xi^{x!}))\le 0. 
	\end{align*}
	Hence, by monotone convergence,
	\begin{align*}
		\Cov(\xi^{x!}(W\setminus T_x),\xi^{x!}(S_x))+\Cov(\xi^{x!}(W\setminus T_x), \tilde g(x,\xi^{x!}))&=\Cov(\xi^{x!}(W\setminus T_x), f(\xi^{x!}))\\
		&=\lim_{k \to \infty} \Cov(\min\{k,\xi^{x!}(W\sm T_x)\}, f^{(k)}(\xi^{x!}))\le 0.
	\end{align*}
	This shows that \eqref{covbou2} is bounded by $-\Cov(\xi^{x!}(W\setminus T_x),\xi^{x!}(S_x))$. Therefore, we can process as for increasing scores and obtain the same bound for $T_3$ as in \eqref{symbou}.
	
	% Therefore, we find by definition of the Palm measure and the decorrelation inequality \eqref{eq:rhodec},
	%\begin{align*}
	%	&\g_nn^d |W| \Big(\E[\xi'' (W_n\setminus T_n)] - \E[ \xi'(W_n\setminus T_n)]\Big)\\
	%	&\quad \le  + \g_n n^d |W| \int_{W_n\setminus T_n} |K(o,x)|^2 \d x\\
	%	&\quad \le  + \g_n n^d |W| \int_{W_n\setminus T_n} \varphi(|x|)^2 \d x.
	%\end{align*}
	 
	{\bf Bounding $T_4$.} 
	\begin{comment}
	{\bf Increasing scores.} If $g_n$ is increasing in the second component, we find by the coupling explained in $(i)$ that $\xi'' \subset \xi$. Therefore, the second term in \eqref{symfarbou} is given by
	\begin{align*}
		\E \Big[\sum_{x \in \xi'' \cap W_n\setminus T_n} |\tilde g_n(x,\xi)-\tilde g_n(x,\xi'')|\Big] &= \E \Big[ \sum_{x \in \xi'' \cap W_n\setminus T_n} (\tilde g_n(x,\xi)-\tilde g_n(x,\xi''))\Big]\\
		& \le \E \Big[ \sum_{x \in \xi \cap W_n\setminus T_n} \tilde g_n(x,\xi)\Big]-\E \Big[\sum_{x \in \xi'' \cap B_n\setminus T_n} \tilde g_n(x,\xi'')\Big]. 
	\end{align*}
	Therefore, by \eqref{xifact},
	\begin{align}
		\r \g_n |W_n|\E \Big[\sum_{x \in \xi'' \cap W_n\setminus T_n} |\tilde g_n(x,\xi)-\tilde g_n(x,\xi'')|\Big]& \le |W_n|\int_{W_n \setminus T_n} \big(\g_n^2- \E[\tilde g(o,\xi^{o,x}) \tilde g(x,\xi^{o,x})] \rho_2(x) \big) \d x\nonumber\\
		& \le |W_n| \int_{W_n\setminus T_n} \tilde{C}_{1,1}\tilde{\vp}(\tilde{c}_{1,1}\|x\|)\d x\nonumber\\
		& \le \tilde{C}_{1,1} |W_n|^2 \tilde{\vp}(\tilde{c}_{1,1}t_n). \label{xibou1}
	\end{align}
	
	{\bf Decreasing scores.} However, if $\tilde g_n$ is decreasing in the second component, we obtain using the second condition in Theorem \ref{th:decr} that
\end{comment}
For each $x \in W$ we have
	\begin{align*}
		&\E \Big[\sum_{y \in \xi^{x!,\Xi} \cap \tilde \xi^x \cap W\setminus T_x} |\tilde g(y,\xi^{x!,\Xi})-\tilde g(y,\tilde \xi^x)|\Big]\\
		&\quad =	\E \Big[\sum_{y \in \xi^{x!,\Xi} \cap \tilde \xi^x \cap W\setminus T_x} \one\{(\xi^{x!,\Xi} \Delta \tilde \xi^x)\cap S_x \neq \es\} |\tilde g(y,\xi^{x!,\Xi})-\tilde g(y,\tilde \xi^x)|\Big]\\
		&\quad \le \E \Big[\sum_{z \in (\xi^{x!,\Xi} \Delta \tilde \xi^x) \cap (W\oplus S_x) \setminus T_x} \sum_{y \in \xi^{x!,\Xi} \cap \tilde \xi^x  \cap S_z}  |\tilde g(y,\xi^{x!,\Xi})-\tilde g(y,\tilde \xi^x)|\Big]\\
		&\quad \le \E \Big[\sum_{z \in (\xi^{x!,\Xi} \Delta \tilde \xi^x) \cap (W\oplus S_x) \setminus T_x} \max_{\om \in \{\xi^{x!,\Xi},\tilde \xi^x\}}\sum_{y \in \om \cap S_z}  \tilde g(y,\om)\Big].
	\end{align*}

Here we obtain from Condition \eqref{kiss} that the above is bounded by
		$$
		 \alpha |S|  \E \Big[(\xi^{x!,\Xi} \Delta \tilde \xi^x) ((W\oplus S_x) \setminus T_x)\Big]. 
$$
	Hence, we obtain from the estimate in \eqref{symbou} (with $W$ replaced by $W \oplus S_x$) that
	\begin{align}
		T_4&=\r \int_W 	\E \Big[\sum_{x \in \xi^{x!,\Xi} \cap \tilde \xi^x \cap W\setminus T_x} |\tilde g(x,\xi^{x!,\Xi})-\tilde g(x,\tilde \xi^x)|\Big] \d x\nonumber\\
		&  \le(1+\r \sup_{x \in W}|S|) |W\oplus S|^2 \sup_{y \in T} \phi(\|y\|)^2+3^{5/2} \|K\| |W\oplus S|^2 |S| \phi(d(T,S)).
		\label{xibou2}
	\end{align}
	
	{\em Step 2:} Finally, we shall complete the proof in a second step. Let  $\mathbf L$ and $\mathbf L_{\ms{tr}}$ denote the intensity measure of the truncated process $\Xi_{\ms{tr}}$ from \eqref{def:Xitr} and of $\Xi$, respectively. By definition of the Palm process $\xi^x$, we have
	\begin{align*}
		\mathbf L_{\ms{tr}} (A)=\r \int_{A \cap W} \E [g(x,\xi^x) \one \{\mathcal S(x,\xi^x)\subset S_x\}]\, \d x,\quad A \in \mathcal B^d.
	\end{align*}
	Let $\zeta_{\ms{tr}}$ be a Poisson process with intensity measure $\mathbf L_{\ms{tr}}$. From the triangle inequality for the KR distance we find that
	\begin{align*}
		\mathbf{d_{KR}}(\Xi,\zeta)\le \mathbf{d_{KR}}(\Xi,\Xi_{\ms{tr}}) + \mathbf{d_{KR}}(\Xi_{\ms{tr}}, \zeta_{\ms{tr}})+\mathbf{d_{KR}}(\zeta_{\ms{tr}},\zeta).
	\end{align*}
	From Section 3 we know that $\mathbf{d_{KR}}(\zeta_{\ms{tr}},\zeta) \le d_{TV}(\mathbf L_{\ms{tr}}, \mathbf L)$ and $\mathbf{d_{KR}}(\Xi,\Xi_{\ms{tr}}) \le \mathbf{d_{TV}}(\Xi,\Xi_{\ms{tr}})$, where
	\begin{align*}
		\mathbf{d_{TV}}(\Xi,\Xi_{\ms{tr}})= \mathbb{E} \Xi(W)-\mathbb{E} \Xi_{\ms{tr}}(W)=d_{TV}(\mathbf L, \mathbf L_{\ms{tr}}) 
	\end{align*}
	and
	\begin{align*}
		d_{TV}(\mathbf L, \mathbf L_{\ms{tr}})  =\r \int_W \mathbb{E}[g(x,\xi^x) \one\{\mathcal S(x,\xi^x) \not \subset S_x\}]\, \d x.
	\end{align*}
	This shows that
	\begin{align}
		\mathbf{d_{KR}}(\Xi,\zeta)&\le \mathbf{d_{KR}}(\Xi_{\ms{tr}},\zeta_{\ms{tr}})+2\int_W \mathbb{E}[g(x,\xi^x) \one\{\mathcal S(x,\xi^x) \not \subset S_x\}]\, \d x.\label{step2}
	\end{align}
	Combining $\eqref{step2}$ with \eqref{symbou} and \eqref{xibou2} from {\em Step 1} gives the assertion.
\end{proof}

\section{Proof of Theorem \ref{th:balls}} \label{sec:pr2}

In the proof of Theorem \ref{th:balls}, we repeatedly use that the set of absolute values of the points of the (infinite) Ginibre process $\xi$ has the same distribution as a sequence $(X_i)_{i \in \N}$ of independent random variables with $X_i^2 \sim \text{Gamma}(i,1)$ (see \cite{K92} or \cite[Theorem 26]{BKPV06}). This implies that the mapping $r \mapsto \P(\xi(B_r)=0)$ is continuous and that $\P(\xi(B_r)=0) \downarrow 0$ as $r \to \infty$. Hence, for all $\tau>0$ there exists an increasing sequence $(v_n)_{n \in \N}$ such that 
\begin{equation}
\mathbf L_n(A):=\mathbb{E} [\Xi_n(A)]= \f{|A \cap B_n|}{\pi} \mathbb{P}(\xi^{o!}(B_{v_n})=0)=\f{\tau |A \cap B_n|}{\pi n^2},\quad n \in \mathbb N,\, A \in \mathcal B^2.	\label{nnintass}
\end{equation}

To determine the asymptotic behavior of $v_n$ as $n \to \infty$, we use that by \cite[Theorem 26]{BKPV06},
\begin{equation}
	\mathbb{P}(\xi^{o!}(B_{v_n})=0)=e^{v_n^2}\mathbb{P}(\xi(B_{v_n})=0).	\label{nnvoidprob}
\end{equation}
Moreover, by \cite[Proposition 7.2.1]{BKPV09},
\begin{equation*}
	\lim_{r \to \infty} \frac{1}{r^4} \log \mathbb{P}(\xi(B_{r})=0)=-\frac 14.
\end{equation*}
Therefore, re-writing $\log \mathbb{P}(\xi(B_{r})=0)$ as
$$
\log (n e^{v_n^2}\mathbb{P}(\xi(B_{r}=0))-\log(n) v_n^2,
$$
we find from \eqref{nnintass} and \eqref{nnvoidprob} that $\frac{v_n^4}{\log n}\to 4$ as $n \to \infty$. 

\begin{proof}[Proof of Theorem \ref{th:balls}]
Given $n \in \N$, we choose $\xi$ as the Ginibre process, let $g(x,\om):=\one\{\om(B_{v_n}(x)\sm \{x\})=0\}$, $S_x:=B_{v_n}(x)$ and $T_x:=B_{\log n}(x)$, $x \in \R^2$. Note that $g$ is stabilizing with respect to the (deterministic) stopping set $\mathcal S(x,\om):= S_x,\, \om \in \mathbf N$. %Note that $\ov \Xi_n$ has intensity measure $\ov \mathbf L_n$ given by $\ov \mathbf L_n(A)=n^2\mathbf L_n(A),\, A \in B_n$and choose the score function 
The idea is now to apply Theorem \ref{th:decr} to the process $\Xi_n$ defined at \eqref{nnxidef}, where we choose $\zeta$ is a stationary Poisson process with intensity $\f{\tau}{\pi n^2}$. Then, $\zeta \cap B_n$ has intensity measure $\mathbf L_n$ given at \eqref{nnintass}. Since $g$ is deterministically stabilizing, we have that $E_1=0$. Note moreover that $E_2$ and $F$ are bounded by constant multiples of $\f{(\log n)^2}{n}$. Thus, by the contractivity property of the KR distance and by Theorem \ref{th:decr}, we have
\begin{align}
	\mathbf{d_{KR}}(\Psi_n,\nu \cap B_1) &\le 	\mathbf{d_{KR}}(\Xi_n,\zeta \cap B_n)\nonumber\\
	&\le \int_{B_n}\int_{T_x} \P(\xi^{x!,y!}(B_{v_n}(x)\cup B_{v_n}(y))=0)\rho_2(x,y) \d y \d x + \beta \f{(\log n)^2}{n} \label{bou:krballs}
\end{align}
for some $\beta>0$. Thus, it remains to bound the integral
on the right-hand side above. We first note that by \cite[Theorem 6.5]{ST03}, the reduced Palm process $\xi^{x!,y!}$ is a determinantal process itself. Hence, we can conclude from \cite[Theorem 3.7]{L14} (see also \cite[Theorem 3.2]{LS19}) that $\xi^{x!,y!}$ has negative associations. Recall that a point process $\zeta$ has negative associations if $\E[f(\zeta)g(\zeta)]\le \E[f(\zeta)] \E[g(\zeta)]$ for every pair $f,g$ of real bounded increasing (or decreasing) functions that are measurable with respect to complementary subset $A, A^c$ of $\R^d$, meaning that a function is measurable with respect to $A$ if it is measurable with respect to $\mathcal N_A$. We apply this with the decreasing functions $f(\mu)=\one\{\mu (B_{v_n}(x))=0\}$ and $g(\mu)=\one\{\mu (B_{v_n}(y) \setminus B_{v_n}(x))=0\}$. This gives 
	\begin{equation}
		\P(\xi^{x!,y!}(B_{v_n}(x) \cup B_{v_n}(y))=0)\leq 	\P(\xi^{x!,y!}(B_{v_n}(x))=0)\, \P(\xi^{x!,y!}(B_{v_n}(y)\setminus B_{v_n}(x))=0). \label{T2bou}
	\end{equation}	
	To bound the first probability, we note that by \cite[Theorem 1]{G10} there is a coupling  $(\xi',\xi'')$ of $(\xi^{x!},\xi^{x!,y!})$ such that $\xi''\subset \xi'$ and $|\xi' \setminus \xi''| \le 1$ a.s. This gives 
	$$
	\P(\xi^{x!,y!}(B_{v_n}(x))=0) \le 	\P(\xi^{x!}(B_{v_n}(x)) \le 1).
	$$
	Now we apply the same argument with a coupling of $(\xi, \xi^{x!})$ and obtain the bound
	$$
	\P(\xi(B_{v_n}(x))\le 2)=\P(\xi(B_{v_n})\le 2),
	$$
	where the last equality holds due to the stationarity of $\xi$. As mentioned at beginning of this section, the set of absolute values of the points of the (infinite) Ginibre process $\xi$ has the same distribution as a sequence $(X_i)_{i \in N}$ of independent random variables with $X_i^2 \sim \text{Gamma}(i,1)$. This gives 
	\begin{align*}
		\P(\xi(B_{v_n}) \le 2) &=\P(\#\{j \in \N:\,X_j \le v_n\} \le 2)\\ 
		& \le \P(\#\{j \in \{1,\dots,v_n^2\}:\,X_j \le v_n\} \le 2)\\
		&=\P\Big(\bigcup_{i=1}^{v_n^2} \bigcup_{\substack{j=1\\ j \neq i}}^{v_n^2} \{ \forall k \in \{1,\dots,v_n^2\}\setminus \{i,j\}:\,X_k > v_n \}\Big).
	\end{align*}
	In the above equation, with a slight abuse of notation, we have written $v_n^2$ instead of $\lfloor v_n^2\rfloor$. The union bound yields that the above is bounded by
	\begin{align*}
		\sum_{i=1}^{v_n^2} \sum_{\substack{j=1\\ j \neq i}}^{v_n^2} \P(\forall k \in \{1,\dots,v_n^2\}\setminus \{i,j\}:\,X_k > v_n)
		=\sum_{i=1}^{v_n^2} \sum_{\substack{j=1\\ j \neq i}}^{v_n^2} \prod_{\substack{k =1\\ k\neq i,j}}^{v_n^2} \mathbb P(X_k^2 >v_n^2).
	\end{align*}
	Let $t<1$. The moment generating function $M_{X_k^2}(t)=\E[e^{tX_k^2}]=(1-t)^{-k}$ of $X_k^2$  exists and we obtain from the Chernoff bound that
	$$
	\P(X_k^2>r^2) \le e^{-t r^2} \E [e^{t X_k^2}]=e^{-t r^2} (1-t)^{-k}.
	$$
	For $k<r^2$, this bound is maximized for $t=1- \frac{k}{r^2}$, which gives
	$$
	\P(\xi(B_{v_n}) \le 2)\le \sum_{i=1}^{v_n^2} \sum_{\substack{j=1\\ j \neq i}}^{v_n^2} \prod_{\substack{k =1\\ k\neq i,j}}^{v_n^2} e^{-(1-\frac {k}{v_n^2})v_n^2-k \log\big(\frac{k}{v_n^2}\big)}=\sum_{i=1}^{v_n^2} \sum_{\substack{j=1\\ j \neq i}}^{v_n^2} \prod_{\substack{k =1\\ k\neq i,j}}^{v_n^2} e^{-v_n^2+k-k \log \big(\frac{k}{v_n^2}\big)}.
	$$
	Using here that $u \mapsto u-u\log (u/r^2)$ is increasing  for $u\le r^2$, we find that 
	\begin{align*}
		\P(X_k^2>r^2) & \leq \sum_{i=1}^{v_n^2} \sum_{\substack{j=1\\ j \neq i}}^{v_n^2}\prod_{k=3}^{v_n^2} e^{-v_n^2+k-k \log \big(\frac{k}{v_n^2}\big)}\\
		&\le v_n^4 \prod_{k=3}^{v_n^2} e^{-v_n^2+k-k \log \big(\frac{k}{v_n^2}\big)}\\
		&=v_n^2 e^{-\frac12 (v_n^2-3)(v_n^2-2)-v_n^4\int_{3/v_n^2}^{1}x \log (x) \mathrm{d}x+O(v_n^2 \log v_n)}\\
		&=e^{-\frac14 v_n^4(1+o(1))}
	\end{align*}
	as $n \to \infty$, where we have used that $\int_0^1 x \log (x) \d x=-\frac 14$.
	
	Next we bound the second probability in \eqref{T2bou}. By the same coupling argument as above we find that
	$$
	\mathbb{P}(\xi^{x!,y!}(B_{v_n}(y)\sm B_{v_n}(x))=0)\le \P(\xi (B_{v_n}(y)\sm B_{v_n}(x))\le 2).
	$$
	Next we note that $B_{v_n/2}\left(y+\frac{v_n(y-x)}{2|y-x|}\right) \subset B_{v_n}(y)\setminus B_{v_n}(x)$. Hence, the last probability is bounded by
	$$
	\P\Big(\xi\big(B_{v_n/2}\big(y+\frac{v_n(y-x)}{2|y-x|}\big)\big)\le 2\Big)=\P(\xi(B_{v_n/2})\le 2) \le e^{-\frac 14 (v_n/2)^4 (1+o(1))}
	$$
	by the same estimates as above (with $v_n/2$ instead of $v_n$). Since  $\rho_2(x,y)\le 1/\pi^2$ for all $x,y \in \R^2$, we arrive for all $\eps>0$ at the bound 
	\begin{align*}
\int_{B_n}\int_{T_x} \P(\xi^{x!,y!}(B_{v_n}(x)\cup B_{v_n}(y))=0)\rho_2(x,y) \d(x,y)\le \frac{n (\log n)^2}{\pi} e^{-\frac 14 v_n^4}  e^{-\frac 1{64} v_n^4} \le \f{(\log n)^2}{\pi} n^{\eps-1/16},
	\end{align*}
where we have used \eqref{nnintass} and that $\frac{v_n^4}{\log n}\to 4$ as $n \to \infty$. Hence, the assertion follows from \eqref{bou:krballs}.
\end{proof}

\end{document}